\documentclass[11pt, a4paper]{amsart}
\usepackage{amsmath,amssymb,amsthm,mathtools,wasysym,calc,verbatim,tikz,url,hyperref,mathrsfs,cite,fullpage,bbm,tikz-cd}
\usetikzlibrary{shapes,arrows}
\usetikzlibrary{decorations.pathreplacing}
\usetikzlibrary{3d}
\usetikzlibrary{intersections}
\usepackage[noabbrev,capitalize,nameinlink]{cleveref}
\usepackage[shortlabels]{enumitem}
\usepackage{thm-restate}

\usepackage{comment}

\numberwithin{equation}{section}
\newtheorem{theorem}{Theorem}[section]
\newtheorem{lemma}[theorem]{Lemma}

\newtheorem{prop}[theorem]{Proposition}

\newtheorem{claim}[theorem]{Claim}

\newtheorem{problem}[theorem]{Problem}

\theoremstyle{definition}

\theoremstyle{remark}

\newtheorem*{remark*}{Remark}

\begin{document}

\title{A new proof of two-color partition regularity of Pythagorean triples}
\author{James Leng}

\begin{abstract}
We give a new proof that if the natural numbers are colored in two colors, then there exists a monochromatic Pythagorean triple. The proof was found by ChatGPT Astra.    
\end{abstract}

\maketitle

\section{Introduction}
In \cite{Graham-prob-1, Graham-prob-2}, Erd\"os and Graham pose the following notorious problem.
\begin{problem}
Let $c: \mathbb{N} \to \{1, \dots, K\}$ be a coloring of the natural numbers. Do there exist positive integers $x,y,z$ such that $x^2+y^2=z^2$ and $c(x)=c(y)=c(z)$?
\end{problem}
In other words, are the Pythagorean triples \emph{partition regular}? The precise origin of this problem appears to be somewhat uncertain. Graham later recalled that he and Erd\"os had discussed the problem in talks for some time, but that he did not remember when they first published it. In a later survey, Graham described the problem as having been open for more than thirty years and cited the 1980 Erd\"os--Graham monograph. Thus the problem was certainly circulating by the mid-1970s and was in print by 1980.

The general finite-color problem remains open. The case $K=2$ was answered in the affirmative by Heule, Kullmann, and Marek \cite{heule-kullmann-marek} using a computer-assisted SAT proof. Their formal DRAT proof was almost $200$ terabytes in size; they also produced a compressed certificate of $68$ gigabytes from which the proof could be reconstructed and checked. In this article we give a shorter analytic proof of the two-color case.
\begin{theorem}\label{thm:main}
Let $c: \mathbb{N} \to \{-1,1\}$ be a coloring. Then there exist positive integers $x,y,z$ such that $x^2+y^2=z^2$ and $c(x)=c(y)=c(z)$.
\end{theorem}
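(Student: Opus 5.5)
Suppose for contradiction that $c$ has no monochromatic Pythagorean triple. The plan exploits the homogeneity of $x^2+y^2=z^2$. For every $d$, the recoloring $c_d(n)=c(dn)$ is again triple-free, and so is $-c$. This lets us pass to a limiting coloring that is \emph{multiplicatively structured}.

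Concretely, consider the compact space $\{-1,1\}^{\mathbb{N}}$. Take limit points of the dilates $c_{d}$ along a Følner sequence for the multiplicative semigroup $(\mathbb{N},\cdot)$, for instance $d$ ranging over products of the first $N$ primes raised to large powers, with an averaging or ultrafilter limit. This produces a triple-free coloring $\tilde c$ with strong multiplicative recurrence. The aim is to reduce to one of two structured cases:
\begin{enumerate}
\item[(a)] $\tilde c$ is, on a large multiplicative set, \emph{completely multiplicative}: $\tilde c(mn)=\tilde c(m)\tilde c(n)$, so $\tilde c=\chi$ with $\chi(p)=\pm1$ on primes;
\item[(b)] $\tilde c$ is "multiplicatively aperiodic", in which case a Gowers/correlation argument should show that the count of monochromatic triples is about $1/4$ of the total.
\end{enumerate}
I would carry out (b) via the parametrization $(k(m^2-n^2),\,2kmn,\,k(m^2+n^2))$. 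I would count
\[
\sum_{m,n,k}\prod_{v}\tfrac{1+c(v)}{2}+\prod_{v}\tfrac{1-c(v)}{2}
\]
and expand it. The cross terms are correlations $\mathbb{E}\, c(k L_1)c(k L_2)$ of $c$ along the forms $m^2-n^2=(m-n)(m+n)$, $2mn$ and $m^2+n^2$. Averaging over $k$ turns these into multiplicative correlations, which a Frantzikinis–Host-type structure theorem for multiplicative averages controls by pretentious (multiplicative) parts. The odd-degree term $\mathbb{E}\,c(x)c(y)c(z)$ cancels in the symmetric sum. So the count is positive unless the pairwise correlations are very negative. That forces $c(x)c(y)\approx -1$ for most pairs in triples, which is impossible for three pairwise products: $(c(x)c(y))(c(y)c(z))(c(x)c(z))=1$, so not all three can equal $-1$.

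This reduces the problem to case (a), which is a finite combinatorial statement about sign patterns on primes. A completely multiplicative $\chi$ avoids monochromatic triples only if $\chi(x)=\chi(y)=\chi(z)$ never holds on primitive triples $(m^2-n^2,2mn,m^2+n^2)$. Here I would use explicit small triples to derive a contradiction:
\begin{itemize}
\item $(3,4,5)$ forces $\chi(3)\ne\chi(5)$ or $\chi(3)=-1$;
\item $(5,12,13)$ together with $\chi(12)=\chi(3)$ constrains $\chi(13)$;
\item continuing through $(8,15,17)$, $(7,24,25)$ and $(20,21,29)$, a short case check on $\chi(2),\chi(3),\chi(5),\chi(7),\chi(13),\chi(17),\chi(29)$ should force a monochromatic triple.
\end{itemize}
This is plausible since the SAT result already shows that even general colorings fail on $\{1,\dots,7825\}$.

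The main obstacle is the dichotomy step. Rigorously justifying that the limiting procedure preserves enough of $c$, and that the correlation argument for quadratic forms in two variables is controlled by multiplicative structure, is delicate. Results for $x^2+y^2=z^2$ are only known partially: Frantzikinis–Host handle $x,y$ but not all three variables. So I would first try to strengthen the pigeonhole by exploiting the parity identity above, which only needs \emph{pairwise} correlation control, avoiding the unknown three-point case.
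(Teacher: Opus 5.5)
You correctly isolate the paper's starting point: the cubic term cancels, so a triple-free coloring satisfies $c(x)c(y)+c(x)c(z)+c(y)c(z)=-1$ on every triple, and only pairwise correlations matter. The step meant to produce the contradiction, however, is wrong. Triple-freeness means that on each triple exactly one of the three products is $+1$. The pattern $(+1,-1,-1)$ has product $+1$, so your parity identity holds identically and excludes nothing. Over \emph{any} family of triples the three averaged correlations sum to exactly $-1$, and nothing forces each to be near $-1$; they could all be $-1/3$. What you need is a specific family of triples along which every averaged pairwise correlation is provably $\ge 0$.

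Your dichotomy does not provide such a family. A limit of dilates of $c$ is just another triple-free $\pm1$ coloring, in general neither completely multiplicative nor aperiodic. After the correspondence principle, the relevant object is the spectral measure of $F$ on completely multiplicative unimodular $g$. This measure mixes an aperiodic part with a pretentious part carried by complex-valued $g$ close to $\chi(n)n^{it}$. The vanishing of the aperiodic contribution is essentially the paper's Proposition~\ref{prop:reductiontopretentious}, via Frantzikinakis--Host. The pretentious part, though, is never a $\pm1$ completely multiplicative coloring, so no sign-pattern check in case (a) addresses it. Even that check fails as stated. Taking $\chi(2)=1$, $\chi(3)=-1$ already prevents $(3,4,5)$ and $(7,24,25)$ from being monochromatic. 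The primes $13,17,29$ each occur in only one of your listed triples, so they can be chosen to break the rest.

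The missing idea is the paper's positivity mechanism for the pretentious part. It averages over the triples $(M^2-N^2,2MN,M^2+N^2)$ with $M=Qm+1$ and $N=Qn+v$. Here $v$ is chosen by the Chinese remainder theorem (Lemma~\ref{lem:CRT}) so that $2MN$ and $M^2-N^2$ have prescribed $Q$-parts $q_1\in\Phi_r$ and $q_2\in\Phi_{r,K}$, while $M^2+N^2$ is coprime to $Q$. The paper then averages over $q_1,q_2$ and weights $(m,n)$ by $w_L$, which forces $n/m\to 0$. The effect on pretentious $g$ is as follows:
\begin{itemize}
\item Using the Frantzikinakis--Mountakis concentration estimates, the $q$-averages kill every pretentious $g$ other than $n^{it}$ for the pairs $(1,2),(1,3)$.
\item The same $q$-averages kill every $g$ differing from $\chi(p)p^{it}$ at infinitely many primes for the pair $(2,3)$.
\item The angular weight kills $n^{it}$ with $t\neq0$ for $(1,2),(1,3)$, and makes $P_2/P_3\to1$.
\end{itemize}
Hence the averaged operators $T_{P_i/P_j}$ converge to orthogonal projections. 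The three averaged correlations then sum to $2|\int_X F\,d\mu|^2+\|\mathbb{E}(F\mid\mathcal{X}_{\text{p. fin. supp.}})\|_{L^2}^2\ge0$, contradicting $-1$. Without some construction of this kind that turns the pairwise correlations into squared norms, your argument does not close.
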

We use the colors $\{-1,1\}$ rather than $\{1,2\}$ because the proof relies on the following elementary identity: if $c$ has no monochromatic Pythagorean triple and $a^2+b^2=d^2$, then
$$c(a)c(b)+c(a)c(d)+c(b)c(d)=-1.$$
Such a criterion is quite standard (though novel in this specific context) and has appeared in 3SAT contexts \cite{ODonnell2014}\footnote{We thank Mehtaab Sawhney for pointing this out.}. Apparently it has been known since the 1920s (e.g., \cite{Ising1925}) in the context of the Ising model. 
We briefly recall some related work. Frantzikinakis and Host \cite{Fra-Host-structure-multiplicative} proved some of the first partition-regularity results for homogeneous quadratic equations in three variables. For example, for every finite coloring of $\mathbb{N}$ there exist distinct monochromatic $x,y$ and some $z\in\mathbb{N}$ such that
$$16x^2+9y^2=z^2.$$
Building on this work, and also introducing additional tools via the study of pretentious and non-pretentious multiplicative functions, Frantzikinakis, Klurman, and Moreira \cite{Fra-Klu-Mor} proved partition regularity and multiplicative-density regularity for Pythagorean pairs. In particular, their results guarantee monochromatic pairs of coordinates lying in a Pythagorean triple, and give corresponding multiplicative density statements. Our proof makes essential use of tools developed in \cite{Fra-Host-structure-multiplicative, Fra-Klu-Mor}. In fact, our methods are closer in spirit to those developed in Frantzikinakis and Mountakis \cite{pret_pyth}, who prove multiple-recurrence results for pretentious multiplicative systems along generalized Pythagorean triples.

The methods of \cite{Fra-Host-structure-multiplicative, Fra-Klu-Mor} proceed by applying a Furstenberg correspondence principle to indicator functions of sets. This direction is difficult to make work because triple correlations
$$\int F \circ T_a F \circ T_b F \circ T_d d\mu$$
are not well understood. Our argument is slightly different: we apply the correspondence principle directly to the coloring. Let $\Psi=(\Psi_N)_{N\in\mathbb{N}}$ be a multiplicative F\o lner sequence for $\mathbb{Q}^{\times}_{>0}$ with $\Psi_N\subseteq\mathbb{N}$. After passing to a subsequence $N_k$, we obtain a probability space $\mathbf{X}=(X,\mathcal{X},\mu)$, a measure-preserving $\mathbb{Q}^{\times}_{>0}$-action $(T_q)_{q\in\mathbb{Q}^{\times}_{>0}}$, and a function $F\in L^\infty(\mathbf{X})$ taking values in $\{-1,1\}$ such that
\begin{equation}\label{eq:Furstenberg-Correspondence}
\int_X F(T_a x)F(T_bx)\,d\mu(x)
=
\lim_{k\to\infty}\mathbb{E}_{n\in\Psi_{N_k}}c(an)c(bn)
\end{equation}
for all $a,b\in\mathbb{N}$.

For completeness sake, we give an explicit construction. Take the shift space $X=\{-1,1\}^{\mathbb{Q}_{>0}^{\times}}$ with
$$
(T_qx)_r=x_{rq},
$$
regard $c$ as a point of $X$ (extending it arbitrarily off $\mathbb{N}$), and set $F(x)=x_1$. The measure $\mu$ is a weak-$*$ limit of
$$
\mathbb{E}_{n\in\Psi_{N_k}}\delta_{T_nc}.
$$
The F\o lner property implies that $\mu$ is invariant, and \eqref{eq:Furstenberg-Correspondence} follows immediately from the definition of $F$.

Now suppose that there is no monochromatic Pythagorean triple. If $a^2+b^2=d^2$, then for every $n\in\mathbb{N}$,
$$
c(an)c(bn)+c(an)c(dn)+c(bn)c(dn)=-1.
$$
Averaging and applying \eqref{eq:Furstenberg-Correspondence} gives
$$
\langle T_aF,T_bF\rangle+\langle T_aF,T_dF\rangle+\langle T_bF,T_dF\rangle=-1.
$$
Since $F$ takes values in $\{-1,1\}$, the integrand
$$
F(T_ax)F(T_bx)+F(T_ax)F(T_dx)+F(T_bx)F(T_dx)
$$
takes values in $\{-1,3\}$. Its integral is $-1$, so it equals $-1$ almost everywhere. Since there are only countably many Pythagorean triples, we may pass to an ergodic component on which this identity holds simultaneously for every such triple\footnote{This is not strictly necessary, though we do so anyway since it appears that as written the factors $\mathcal{X}_{\text{p. fin. supp}}$ as defined in \cite{pret_pyth} are only proven to be factors in the case where the underlying transformation is ergodic, though the result should hold for a general probability preserving system.}. We henceforth assume that $T$ is ergodic. The remainder of the proof analyzes the consequences of this identity. In contrast with the other method discussed, this method only relies on double correlations
$$\int F \circ T_a F \circ T_b d\mu$$
which are well-understood by the aforementioned works \cite{Fra-Host-structure-multiplicative, Fra-Klu-Mor, pret_pyth}. (In fact, understanding these double correlations is what allowed the authors of \cite{Fra-Host-structure-multiplicative, Fra-Klu-Mor} to prove their pythagorean pairs results.)

\subsection{Acknowledgements}
We thank Mehtaab Sawhney for pointing out the connection to \cite{ODonnell2014}. This proof was found by ChatGPT. For more information on how this proof was found, we refer the reader to Section~\ref{sec:aiacknowledgement}.

\section{Notation and conventions}
We shall write $\mathbb{E}_{n \in A} = \frac{1}{|A|} \sum_{n \in A}$. \emph{Multiplicative functions} will refer to \emph{completely multiplicative functions}. We say that $f = O(g)$ or $f \ll g$ if there exists a constant $C$ such that $|f| \le C|g|$.

Put $P_1(m,n)=2mn$, $P_2(m,n)=m^2-n^2$, and $P_3(m,n)=m^2+n^2$. Fix integers
\[
5\le r<K<Y
\]
and define
\[
\Phi_r
:=
\left\{
\prod_{p\le r}p^{e_p}:r<e_p\le \frac{3r}{2}
\right\},
\]
\[
\Phi_{r,K}
:=
\left\{
\prod_{r<p\le K}p^{e_p}:K<e_p\le \frac{3K}{2}
\right\}.
\]
Set
\[
Q=Q_Y:=\prod_{p\le Y}p^{4Y}.
\]
Choose independently
\[
q_1\in\Phi_r,
\qquad q_2\in\Phi_{r,K}.
\]

\begin{lemma}\label{lem:CRT}
For each such $q_1,q_2$ one may choose $0\le v<Q$ so that
\begin{equation}\label{eq:gcd-normalization}
(2v,Q)=q_1,
\qquad
(1-v^2,Q)=q_2,
\qquad
(1+v^2,Q)=1,
\end{equation}
and, in addition,
\begin{equation}\label{eq:crt-product-congruences}
\frac{2v}{q_1}\equiv1\pmod{q_1},
\qquad
\frac{1-v^2}{q_2}\equiv q_2^{-1}\pmod{q_1},
\qquad
1+v^2\equiv1\pmod{q_1}.
\end{equation}
\end{lemma}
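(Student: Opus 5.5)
The plan is to build $v$ prime by prime with the Chinese Remainder Theorem. Since $Q=\prod_{p\le Y}p^{4Y}$, it suffices to prescribe $v$ modulo $p^{4Y}$ for each prime $p\le Y$. Each condition in \eqref{eq:gcd-normalization} is then a statement about the $p$-adic valuations $\min(v_p(\cdot),4Y)$ of $2v$, $1-v^2$, $1+v^2$. The conditions in \eqref{eq:crt-product-congruences} only involve primes $p\le r$. First note the exponent bounds: those of $q_1$ are at most $3r/2$ and those of $q_2$ at most $3K/2$. In both cases they are strictly less than $4Y$, so all prescribed valuations are visible modulo $p^{4Y}$.

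\emph{Primes $p\le r$.} Write $e_p=v_p(q_1)$; here $r<e_p\le 3r/2$. I will require $2v\equiv q_1\pmod{p^{4Y}}$. For odd $p$ this means $v\equiv q_1\cdot 2^{-1}$. For $p=2$ it means $v\equiv q_1/2 \pmod{2^{4Y-1}}$, and I fix any lift mod $2^{4Y}$.

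Then $v_p(2v)=e_p$. Moreover $2v/q_1\equiv 1\pmod{p^{4Y-e_p}}$, and $4Y-e_p\ge e_p$, so this holds modulo $p^{e_p}$. Also $v_p(v)\ge e_p-1\ge r\ge 5$, so $v^2\equiv 0\pmod{p^{e_p}}$ because $2(e_p-1)\ge e_p$. Hence $1\pm v^2\equiv 1\pmod{p^{e_p}}$, and both are $p$-units.

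Combining over $p\le r$ gives the first and third congruences of \eqref{eq:crt-product-congruences}. For the second: $q_2$ is coprime to $q_1$ (its primes exceed $r$), and $1-v^2\equiv 1\pmod{q_1}$. Therefore $(1-v^2)/q_2\equiv q_2^{-1}\pmod{q_1}$.

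\emph{Primes $r<p\le K$.} Let $f_p=v_p(q_2)\in(K,3K/2]$. I set $v\equiv 1+p^{f_p}\pmod{p^{4Y}}$. Then $1-v^2=-(v-1)(v+1)$ with $v-1\equiv p^{f_p}$, and $v+1\equiv 2$ is a unit because $p$ is odd. So $v_p(1-v^2)=f_p<4Y$. Also $v$ and $1+v^2\equiv 2\pmod p$ are units, so $p\nmid 2v(1+v^2)$.

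\emph{Primes $K<p\le Y$.} Here $p>5$, and I set $v\equiv 2\pmod{p^{4Y}}$. Then $2v=4$, $1-v^2=-3$ and $1+v^2=5$ are all $p$-units.

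Gluing these local choices by CRT gives $0\le v<Q$. Reading off the valuations prime by prime yields $(2v,Q)=q_1$, $(1-v^2,Q)=q_2$ and $(1+v^2,Q)=1$.

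The only place needing care is the prime $2$, together with checking that \eqref{eq:crt-product-congruences} is automatic once $2v\equiv q_1$ to high $p$-adic precision. This rests on the inequality $2(e_p-1)\ge e_p$ and on $e_p\le 3r/2<4Y-e_p$; everything else is routine.
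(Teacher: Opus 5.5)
Your proof is correct and is essentially the paper's CRT construction: $2v\equiv q_1$ at the primes $p\le r$, $v\equiv 1+p^{d_p}$ at $r<p\le K$, and at $K<p\le Y$ a residue avoiding the roots of $x(x^2-1)(x^2+1)$ (your explicit choice $v\equiv 2$). The differences are cosmetic: you impose each local condition modulo the full $p^{4Y}$ rather than a minimal modulus, and you spell out how the middle congruence of \eqref{eq:crt-product-congruences} follows from $1-v^2\equiv 1 \pmod{q_1}$ and $(q_1,q_2)=1$, which the paper leaves implicit.
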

This is a close variant of the modular construction used by Frantzikinakis and Mountakis \cite{pret_pyth}. 
\begin{proof}
By the Chinese remainder theorem, it suffices to specify $v$ modulo the relevant prime powers dividing $Q$. Write $q_1 = \prod_{p \le r} p^{e_p}$ and $q_2 = \prod_{r < p \le K} p^{d_p}$. For $p \le r$, we put $v \equiv \frac{q_1}{2} \pmod{q_1^2/2}$. Hence, $\frac{2v}{q_1} \equiv 1 \pmod{q_1}$ and $(2v, Q) = q_1$. Moreover, we have
$$v^2 \equiv 0 \pmod{q_1} \implies 1 + v^2 \equiv 1 \pmod{q_1}.$$
For $r < p \le K$, we let $v \equiv 1 + p^{d_p} \pmod{p^{d_p + 1}}$. Since $p$ is odd, $v+1\equiv2\pmod p$, and hence $\nu_p(1-v^2)=\nu_p(1-v)=d_p$. Also, $(p,2v)=1$ and $(p,v^2+1)=1$. Finally, for $K < p \le Y$, simply choose $v$ to lie away from the roots of $P(x) = x(x^2 - 1)(x^2 + 1)$. The Chinese remainder theorem now gives $v\pmod Q$ satisfying all these local conditions, and these conditions imply the asserted gcd identities and congruences.
\end{proof}
Now let
\begin{equation}\label{eq:affine-grid}
M=Qm+1,
\qquad
N=Qn+v.
\end{equation}
For $L\ge2$ define
\begin{equation}\label{eq:angular-weight}
w_L(m,n)
:=
\frac mn\,
\mathbf{1}_{\{L\le\log(m/n)\le2L\}}
\end{equation}.
Let
$$\mathbb{E}_{\substack{m, n \le X \\ m, n \sim w_{L}}} f(m, n) := \frac{\sum_{m, n \le X} w_L(m, n) f(m, n)}{\sum_{m, n \le X} w_L(m, n)}.$$
We shall work with the averages
$$\mathbb{E}_{q_1 \in \Phi_r}\mathbb{E}_{q_2 \in \Phi_{r, K}}\mathbb{E}_{\substack{m, n \le X \\ m, n \sim w_L}} f(P_i(M, N)/P_j(M, N)).$$
For the iterated limsups below, the parameters are sent to infinity in the order
\begin{equation}\label{eq:limit-order}
X\to\infty,
\qquad
Y\to\infty,
\qquad
K\to\infty,
\qquad
r\to\infty,
\qquad
L\to\infty.
\end{equation}
We denote the corresponding nested limsup by $\limsup_{L,r,K,Y,X}$. 

\section{Reducing to an ergodic-theoretic result}
Our main ergodic theoretic result is the following.
\begin{theorem}\label{thm:maindecomposition}
For every $f\in L^\infty(\mathbf{X})$ and every $1\le i<j\le3$,
$$\limsup_{L,r,K,Y,X} \left\|\mathbb{E}_{q_1 \in \Phi_r}\mathbb{E}_{q_2 \in \Phi_{r,K}}\mathbb{E}_{\substack{m,n\le X \\ m, n \sim w_L}} T_{P_i(M,N)/P_j(M,N)}f - \mathcal{P}_{ij}f\right\|_{L^2(\mathbf{X})} = 0.$$
where
$$\mathcal{P}_{ij}f = \begin{cases} \left(\int_X f\,d\mu\right)\mathbf{1} & (i,j) \in \{(1,2),(1,3)\}, \\ \mathbb{E}(f\mid \mathcal{X}_{\text{p. fin. supp.}}) & (i,j)=(2,3).\end{cases}$$
\end{theorem}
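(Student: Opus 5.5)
We prove the statement spectrally. By Bochner--Herglotz for the discrete abelian group $\mathbb{Q}^{\times}_{>0}$, the dual group is the compact group of completely multiplicative functions $\chi:\mathbb{N}\to S^1$, extended to $\mathbb{Q}^{\times}_{>0}$. For $f\in L^\infty(\mathbf{X})$ there is a finite spectral measure $\sigma_f$ with $\langle T_q f, T_{q'} f\rangle=\int \chi(q/q')\,d\sigma_f(\chi)$. Write $A(\chi)$ for $\mathbb{E}_{q_1}\mathbb{E}_{q_2}\mathbb{E}^{w_L}_{m,n}\chi\bigl(P_i(M,N)/P_j(M,N)\bigr)$. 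Expanding the square, the $L^2$ norm in the statement equals $\int |A(\chi)-\mathbf{1}_{\mathcal{S}}(\chi)|^2\,d\sigma_f(\chi)$. Here $\mathcal{S}=\{1\}$ when $(i,j)\in\{(1,2),(1,3)\}$. When $(i,j)=(2,3)$, $\mathcal{S}$ is the set of $\chi$ with $\chi(p)=1$ for all but finitely many $p$; this is the spectral description of $\mathcal{X}_{\text{p. fin. supp.}}$ used in \cite{pret_pyth}, and we invoke it, which uses the ergodicity assumed earlier. Since $|A|\le1$, dominated convergence reduces the theorem to a pointwise statement: for every fixed $\chi$, the iterated $\limsup$ of $|A(\chi)-\mathbf{1}_{\mathcal{S}}(\chi)|$ is $0$. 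We may also need this only $\sigma_f$-almost everywhere, for instance off the countable set of Archimedean characters $n^{it}$, if $\sigma_f$ is continuous there.

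\textbf{Non-pretentious $\chi$.} We use the Frantzikinakis--Host/Klurman--Moreira machinery \cite{Fra-Host-structure-multiplicative, Fra-Klu-Mor}. Averages of $\chi(P_i(M,N))\overline{\chi(P_j(M,N))}$ over the lattice $M=Qm+1$, $N=Qn+v$ tend to $0$ as $X\to\infty$ for fixed $Q$. This holds because $P_1,P_2,P_3$ factor into pairwise independent linear forms over $\mathbb{Z}[i]$, so the needed Gowers uniformity for non-pretentious $\chi$ applies. The weight $w_L$ is a smooth-ish angular cutoff and does not affect this.

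\textbf{Pretentious $\chi$.} Here $\chi\sim n^{it}\psi$ with $\psi$ a Dirichlet character. By concentration estimates, $\chi(P(M,N))$ is asymptotically $\prod_{p\le Y}\chi(p^{\nu_p(P)})\cdot \exp\bigl(\sum_{Y<p\le X}\tfrac{\chi(p)-1}{p}\cdot(\dots)\bigr)\cdot P^{it}$. The large-prime contributions cancel in the ratio $P_i/P_j$ in the limit $Y\to\infty$. The small-prime parts are pinned by \cref{lem:CRT}: $\nu_p$ of the three forms is $q_1$, $q_2$, $1$ respectively, and the unit parts are fixed by \eqref{eq:crt-product-congruences}. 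Thus $A(\chi)$ becomes approximately $\mathbb{E}_{q_1}\mathbb{E}_{q_2}\chi(q_1)^{\pm1}\chi(q_2)^{\pm1}\cdot\mathbb{E}^{w_L}(M/N\text{-Archimedean factor})$. The $q_1$ and $q_2$ averages over the exponent boxes $r<e_p\le 3r/2$ are products of geometric averages $\mathbb{E}_e\chi(p)^e$. These tend to $0$ unless $\chi(p)=1$ for all $p\le r$, respectively $r<p\le K$, for the forms involving $2mn$ or $m^2-n^2$. For $(2,3)$ the $q_1$ factor is absent, since it enters neither form, so only the tail primes must be trivial. This is exactly how the finite-support condition arises. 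The Archimedean factor $(P_i/P_j)^{it}$ with $\log(m/n)\in[L,2L]$ behaves like $e^{\pm 2itL'}$ for $(1,2)$ and $(1,3)$, since $2MN/M^2\approx 2e^{-L'}$. Averaging with weight $m/n$ over $L'\in[L,2L]$ kills it as $L\to\infty$ unless $t=0$. For $(2,3)$ we get $\bigl((M^2-N^2)/(M^2+N^2)\bigr)^{it}\to1$, consistent with $\mathcal{S}$.

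\textbf{Main obstacle.} The hardest step is making the pretentious asymptotics uniform and correctly ordered along the limit order \eqref{eq:limit-order}. This means controlling the contribution of primes between $K$ and $Y$, where $v$ only avoids roots so the valuations are $0$, and of primes above $Y$, uniformly in $q_1,q_2$. We would first try to follow the argument of \cite{pret_pyth} nearly verbatim, replacing their weights with $w_L$. We would then handle pretentious characters with $\psi$ nontrivial via the congruence normalization in \eqref{eq:crt-product-congruences}, which fixes the residues of $P_i/q$ modulo $q_1$.
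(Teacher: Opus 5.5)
Your architecture is the paper's: spectral reduction, Frantzikinakis--Host/Klurman--Moreira uniformity for the aperiodic part, concentration estimates plus Lemma~\ref{lem:CRT} and the exponent-box averages over $q_1,q_2$ for the pretentious part, and the weight $w_L$ for the Archimedean factor. However, the step that identifies the limit for $(i,j)=(2,3)$ fails. You take $\mathcal{S}$ to be the set of $\chi$ with $\chi(p)=1$ for all but finitely many $p$. But $\mathcal{X}_{\text{p. fin. supp.}}$ is spectrally supported on the strictly larger set of $\chi$ with $\chi(p)=\psi(p)p^{it}$ for all but finitely many $p$, for some Dirichlet character $\psi$ and some $t\in\mathbb{R}$. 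With your $\mathcal{S}$ the pointwise claim $\limsup|A(\chi)-\mathbf{1}_{\mathcal{S}}(\chi)|=0$ is false, as two examples show.
\begin{itemize}
\item For $\chi=n^{it}$ with $t\neq0$ one has $A(\chi)=1+O(|t|e^{-2L})$. Indeed $N/M\le 2e^{-L}$ on the support of $w_L$, so $\log(P_2/P_3)=O(e^{-2L})$; you note this yourself.
\item Let $\chi$ be a unimodular extension of a nontrivial Dirichlet character $\psi$, and take $r$ large enough that the conductor of $\psi$ divides every $q_1\in\Phi_r$. Then \eqref{eq:gcd-normalization} and \eqref{eq:crt-product-congruences} give $P_2=q_2R_2$ with $R_2\equiv q_2^{-1}\pmod{q_1}$ and $P_3\equiv1\pmod{q_1}$, where $R_2$ and $P_3$ are coprime to $Q$. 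Hence $\chi(P_2)\overline{\chi(P_3)}=\psi(q_2)\overline{\psi(q_2)}=1$, and $A(\chi)=1$ exactly.
\end{itemize}
Neither character lies in your $\mathcal{S}$. Your fallback does not rescue this. The family $\{n^{it}:t\in\mathbb{R}\}$ is uncountable, $\sigma_f$ may charge it (an eigenfunction with $T_qh=q^{it}h$ produces an atom), and the Dirichlet examples are not Archimedean at all.

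The mismatch comes from the bases of your geometric averages. After concentration, the $Q$-coprime parts $R_1=P_1/q_1$, $R_2=P_2/q_2$ and $P_3$ are modelled by $\psi(R)R^{it}$ times a bounded large-prime factor. So the relevant bases are not $\chi(p)$. For $q_1$ the base is $\chi(p)p^{-it}$, since $R_1\equiv1\pmod{q_1}$ forces $\psi(R_1)=1$. For $q_2$ it is $\chi(p)\overline{\psi(p)}p^{-it}$, since $\psi(R_2)=\overline{\psi(q_2)}$. With these bases, the $q_1$-average in cases $(1,2)$ and $(1,3)$ survives only for $\chi=n^{it}$, which the $w_L$-average then kills unless $t=0$. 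The $q_2$-average in case $(2,3)$ survives exactly on the correct $\mathcal{S}$. You must then still prove $A(\chi)\to1$ on all of that set, which is Step~1 of the paper. Take $r$ beyond the primes of the conductor and the finitely many exceptional primes, cancel the $\psi$-factors as above, and use $(P_2/P_3)^{it}\to1$. This does not rest on a cancellation of the large-prime factors in $P_i/P_j$. You assert that cancellation, but it fails in general for $(1,3)$ and $(2,3)$, because the forms have different local densities. What matters is that these factors are bounded and independent of $q_1,q_2$ off $\mathcal{S}$, and identically $1$ on $\mathcal{S}$ once $Y$ exceeds the exceptional primes.
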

Here $\mathcal{X}_{\text{p. fin. supp.}}$ is the factor defined in Frantzikinakis--Mountakis \cite{pret_pyth}: its $L^2$-space consists of functions whose spectral measures are supported on pretentious multiplicative functions $g$ for which $g(p)=\chi(p)p^{it}$ for all but finitely many primes $p$, for some Dirichlet character $\chi$ and $t\in\mathbb{R}$.

Let us prove Theorem~\ref{thm:main} assuming Theorem~\ref{thm:maindecomposition}.
\begin{proof}[Proof of Theorem~\ref{thm:main} assuming Theorem~\ref{thm:maindecomposition}.]
As before, assume that the coloring $c$ has no monochromatic Pythagorean triple. We now take $a = M^2 - N^2$, $b = 2MN$, and $d = M^2 + N^2$. Averaging the identity
$$\langle T_aF,T_bF\rangle+\langle T_aF,T_dF\rangle+\langle T_bF,T_dF\rangle=-1$$
and using $\langle T_\ell F,T_mF\rangle=\langle T_{\ell/m}F,F\rangle$, we may choose a sequence $(L_s,r_s,K_s,Y_s,X_s)$ along which the three errors in Theorem~\ref{thm:maindecomposition} tend to zero. Along this sequence,
\begin{align*}
-1 &= \lim_{s \to \infty} \mathbb{E}_{q_1 \in \Phi_{r_s}}\mathbb{E}_{q_2 \in \Phi_{r_s,K_s}}\mathbb{E}_{\substack{m,n \le X_s \\ m, n \sim w_{L_s}}}\left(\langle T_aF, T_bF\rangle + \langle T_aF, T_dF \rangle + \langle T_bF, T_dF \rangle\right) \\
&= 2\left|\int_X F\,d\mu\right|^2+\|\mathbb{E}(F\mid\mathcal{X}_{\text{p. fin. supp.}})\|_{L^2(\mathbf{X})}^2 \\
&\ge 0
\end{align*}
This is a contradiction.
\end{proof}
Thus, the remaining task is to prove Theorem~\ref{thm:maindecomposition}.

\section{Proof of the ergodic-theoretic problem}
We now prove Theorem~\ref{thm:maindecomposition}. Our first proposition lets us discard the non-pretentious part whilst taking an average.
\begin{prop}\label{prop:reductiontopretentious}
Let $f\in L^\infty(\mathbf{X})$ have spectral measure supported on the aperiodic multiplicative functions. Then
$$\limsup_{L,r,K,Y,X} \|\mathbb{E}_{q_1 \in \Phi_r}\mathbb{E}_{q_2 \in \Phi_{r, K}}\mathbb{E}_{\substack{m, n \le X \\ m, n \sim w_L}} T_{P_i(M, N)/P_j(M, N)}f\|_{L^2(\mathbf{X})} = 0.$$
\end{prop}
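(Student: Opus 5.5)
We pass to the spectral side. By the spectral theorem for the unitary $\mathbb{Q}^{\times}_{>0}$-action $(T_q)$, the dual group of $\mathbb{Q}^\times_{>0}$ is the group $\mathcal{M}$ of completely multiplicative functions $g:\mathbb{N}\to S^1$, extended to positive rationals by $g(a/b)=g(a)\overline{g(b)}$. Let $\sigma_f$ be the spectral measure of $f$ on $\mathcal{M}$, so that $\langle T_q f,f\rangle=\int_{\mathcal{M}} g(q)\,d\sigma_f(g)$. Expanding the square gives
$$\Big\|\mathbb{E}_{q_1}\mathbb{E}_{q_2}\mathbb{E}_{m,n\sim w_L} T_{P_i(M,N)/P_j(M,N)}f\Big\|_{L^2}^2=\int_{\mathcal{M}}\Big|\mathbb{E}_{q_1}\mathbb{E}_{q_2}\mathbb{E}_{m,n\sim w_L} g(P_i(M,N))\overline{g(P_j(M,N))}\Big|^2\,d\sigma_f(g).$$
By hypothesis $\sigma_f$ is supported on aperiodic $g$. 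The integrand is bounded by $1$, so dominated convergence reduces the proposition to a pointwise statement. For every fixed aperiodic $g$ we want
$$\lim_{X\to\infty}\mathbb{E}_{m,n\le X,\ m,n\sim w_L}\, g(P_i(M,N))\overline{g(P_j(M,N))}=0$$
for each fixed $L,r,K,Y,q_1,q_2$, with $M=Qm+1$, $N=Qn+v$. The $X$-limit is taken first, so the remaining limsups are trivially zero. One point needs care. Dominated convergence must be applied to the full nested limsup. Since the integrand vanishes pointwise already at the innermost ($X$) limit, for fixed outer parameters the $X$-limsup of the integral is $0$ by the reverse Fatou lemma. Every outer limsup is then $0$ as well.

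The core step is a pointwise vanishing result for binary quadratic forms evaluated along affine lattices. This is exactly the input of Frantzikinakis--Host \cite{Fra-Host-structure-multiplicative} and Frantzikinakis--Klurman--Moreira \cite{Fra-Klu-Mor}. For aperiodic $g$ and pairwise independent linear-times-linear or irreducible quadratic forms, averages of $g(P_i)\overline{g(P_j)}$ over $m,n$ in arithmetic progressions vanish. Concretely, we factor each form into its pieces:
\begin{itemize}
\item $P_1=2MN$;
\item $P_2=(M-N)(M+N)$;
\item $P_3=M^2+N^2$, which is a norm form from $\mathbb{Z}[i]$.
\end{itemize}
Complete multiplicativity then rewrites $g(P_i)\overline{g(P_j)}$ as a product of $g$ or $\bar g$ evaluated at linear forms in $(m,n)$, together with a Hecke-type twist $\tilde g(\mathfrak{a})=g(N\mathfrak{a})$ evaluated at the Gaussian integer $M+iN$. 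For aperiodic $g$ the lifted function $\tilde g$ is also non-pretentious on $\mathbb{Z}[i]$. A Kátai/Bourgain--Sarnak--Ziegler orthogonality argument, or more directly the concentration estimate plus the Gowers-norm inverse theorem for aperiodic multiplicative functions used in \cite{Fra-Host-structure-multiplicative}, then shows that these two-variable averages vanish. The weight $w_L$ needs separate handling. It is a bounded, piecewise-smooth function of $m/n$ supported on a sector, so we approximate it uniformly by finite linear combinations of indicators of convex sectors. The cited results hold for averages over dilated convex regions, which covers this.

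I expect the main obstacle to be the pair $(i,j)=(2,3)$, and the general need to verify that the specific affine substitution keeps the relevant forms "independent" in the sense the cited theorems require. The paper's explicit remark that $\mathcal{P}_{23}$ retains a pretentious contribution signals that $(2,3)$ is the delicate pair. For aperiodic $g$, however, the argument only needs that no product of linear factors makes $g(P_2)\overline{g(P_3)}$ degenerate into a function of a single linear form. That holds because $M^2+N^2$ is irreducible over $\mathbb{Q}$ while $M^2-N^2$ splits. For this pair I would first try to invoke the theorem of \cite{Fra-Klu-Mor} on averages of $g(Q_1(m,n))\overline{g(Q_2(m,n))}$ along lattices. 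If its hypotheses do not match exactly, I would reduce via a Cauchy--Schwarz/van der Corput step in $n$ to the Fourier-uniformity of $g$ on short intervals in progressions, which holds for aperiodic $g$.
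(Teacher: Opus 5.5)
\textbf{The gap is in the core pointwise step.} Your reduction matches the paper: spectral theorem, reverse Fatou at fixed outer parameters, and pointwise vanishing for each aperiodic $g$ and each fixed $(q_1,q_2)$. Replacing the paper's expansion of $w_L$ into $m^{iu}n^{-iu}$ by sector indicators is harmless. The problem is the mechanism you give for the pointwise vanishing.

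Your claim that the lift $\tilde g(\mathfrak a)=g(N\mathfrak a)$ of an aperiodic $g$ is non-pretentious on $\mathbb{Z}[i]$ is false. Let $g$ be completely multiplicative with $g(2)=1$, $g(p)=1$ for $p\equiv 1\pmod 4$, and $g(p)=i$ for $p\equiv 3\pmod 4$.
\begin{itemize}
\item This $g$ is aperiodic. The primes $p\equiv 1\pmod 4$ exclude pretending to every $\chi(n)n^{it}$ except $t=0$ with $\chi$ induced from a character mod $4$. For those, $\mathrm{Re}(g(p)\overline{\chi(p)})=0$ at every $p\equiv 3\pmod 4$.
\item However, $\nu_p(m^2+n^2)=2\nu_p(\gcd(m,n))$ for $p\equiv 3\pmod 4$. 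Hence $g(m^2+n^2)=(-1)^{\Omega_3(\gcd(m,n))}$, where $\Omega_3$ counts prime factors $\equiv 3\pmod 4$ with multiplicity.
\item So $\tilde g$ pretends to the trivial character. On your grid $\gcd(M,N)$ is coprime to $Q$, so $\overline{g(M^2+N^2)}=1$ off a set of density $O(1/Y)$. No cancellation can be extracted from the quadratic factor.
\item The general principle you quote also fails for irreducible quadratic forms: for this $g$, $g(m^2+n^2)\overline{g(m^2+4n^2)}=1$ identically.
\end{itemize}

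\textbf{The missing idea.} The cancellation always comes from a linear form at which $g$ is evaluated. When $2\in\{i,j\}$ this is $g(M-N)$, coming from $P_2=(M-N)(M+N)$. When $(i,j)=(1,3)$ it is $g(N)$, coming from $P_1=2MN$.

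The paper proceeds as follows:
\begin{itemize}
\item It homogenizes the grid and expands the weight into $m^{iu}n^{-iu}$.
\item It expands the congruence of the variable $\equiv 1\pmod Q$ into Dirichlet characters. The other congruence cannot be expanded this way, since $(v,Q)\neq 1$.
\item It notes that $n\mapsto \chi(n)n^{-iu}g(n)$ is still aperiodic.
\item It then applies \cite[Theorems 2.5 and 9.7]{Fra-Host-structure-multiplicative}. As used there, these give vanishing once the function at one of the linear forms is aperiodic, whatever multiplicative function sits at $m^2+n^2$.
\end{itemize}

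In particular, $(2,3)$ is not the delicate pair for aperiodic $g$: the factor $g(M-N)$ handles it exactly like the other pairs. The delicacy of $(2,3)$ in Theorem~\ref{thm:maindecomposition} lies in the pretentious part, where $P_2/P_3\to 1$ on the sector. Your heuristic about irreducibility of $M^2+N^2$ is not the relevant criterion. Your van der Corput fallback would need short-interval Fourier uniformity, which is not available for merely aperiodic $g$ and is not needed.
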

\begin{proof}
To prove this, we require the following intermediate claim.
\begin{claim}
We may write
$$\mathbb{E}_{m, n \le X} \left|w_L(m, n) - \sum_{j = 1}^M a_j m^{i u_j}n^{-iu_j}\right| \le \varepsilon$$
where $|a_j| \le 1$, $u_j$ are real, and $M$ depends only on $L, \varepsilon$.
\end{claim}
\begin{proof}
Write $w_L(m, n) = \exp(\log(m/n))1_{[L, 2L]}$. The function $g(x) = e^x \chi(\log(m/n))(x)$ where $\chi$ is a smooth cutoff and equals one on $[L, 2L]$ can be rescaled into a piecewise Lipschitz function on $\frac{1}{\log(X)}\mathbb{Z}/\mathbb{Z}$. By a standard Fourier approximation argument applied to $g_{\mathrm{smooth}}$, we may write
$$g(x) =  \sum_{j = 1}^M a_je(iu_j x) + g_{\mathrm{sml}} + O(\varepsilon')$$
where $M$ depends only on $\varepsilon'$ and $L$.

By substituting $x = \log(m/n)$, we obtain
$$w_L(m, n) = \sum_{j = 1}^M a_j m^{iu_j}n^{-iu_j} + g_{\mathrm{sml}}(\log(m/n)) + O(\varepsilon').$$
This gives the desired decomposition.
\end{proof}
Now, since $\sum_{m,n\le X}w_L(m,n)\asymp_L X^2$, letting $\varepsilon\to0$, it suffices to show that for each $t$ we have
$$\limsup_{r, K, Y, X} \|\mathbb{E}_{q_1 \in \Phi_r} \mathbb{E}_{q_2 \in \Phi_{r, K}} \mathbb{E}_{m, n \le X} 1_{\log(m/n) \in [L, 2L]} m^{it}n^{-it}T_{P_i(M, N)/P_j(M, N)}f\| = 0.$$
In fact, we shall fix $q_1, r, q_2, K$, and $Y$. Applying the spectral theorem, it suffices to show that
$$\limsup_{X \to \infty} \int |\mathbb{E}_{m, n \le X} 1_{\log(m/n) \in [L, 2L]} m^{it}n^{- it} g(P_i(Qm+1,Qn+v))\overline{g(P_j(Qm+1,Qn+v))}|^2 d\sigma_f(g) = 0.$$
Note that we may write
$$\mathbb{E}_{m, n \le X} 1_{\log(m/n) \in [L, 2L]} m^{it}n^{- it} g(P_i(Qm+1,Qn+v))\overline{g(P_j(Qm+1,Qn+v))} $$
$$= \frac{1}{X^2}\sum_{\substack{m, n \le Q(X + 1) \\ m \equiv v \pmod{Q} \\n \equiv 1 \pmod{Q}}} \left(\frac{m-v}{Q}\right)^{it}\left(\frac{n-1}{Q}\right)^{-it} 1_{\log(m/n) \in [L, 2L]} g(P_i(m, n))\overline{g(P_j(m, n))} + O(Q^2/X^2).$$
Via a Taylor expansion, we have
$$\left(\frac{m-v}{Q}\right)^{it}\left(\frac{n-1}{Q}\right)^{-it} = \left(\frac{m-v}{n-1}\right)^{it} = \left(\frac{m}{n}\right)^{it} + O_Q\left(|t|\left(\frac{1}{m}+\frac{1}{n}\right)\right).$$
After normalization, the latter error contributes $O_Q(\log X/X)$ and therefore tends to zero. Thus, we obtain
$$\frac{1}{X^2}\sum_{\substack{m, n \le Q(X + 1) \\ m \equiv v \pmod{Q} \\n \equiv 1 \pmod{Q}}} 1_{\log(m/n) \in [L, 2L]} m^{it}n^{-it} g(P_i(m, n))\overline{g(P_j(m, n))} + O_Q(1/X^2).$$
Now Fourier-expand the $1_{n \equiv 1 \pmod{Q}}$ into Dirichlet characters. (Note we can't do the same for $m$ since $v$ is not relatively prime to $Q$.) Using Fatou's lemma, it suffices to prove the corresponding cancellation estimate for each resulting periodic factor; equivalently, one is reduced to averages of the form
$$\mathbb{E}_{m,n\le X} \chi(n)1_{m \equiv v \pmod{Q}}m^{it}n^{-it}g(P_i(m,n))\overline{g(P_j(m,n))},$$
where $\chi$ and $\chi'$ are characters modulo a divisor of $Q$. 
The result follows from the Frantzikinakis--Host's results \cite[Theorem 2.5 and Theorem 9.7]{Fra-Host-structure-multiplicative}. For instance, if one of $i$ or $j$ is $2$, then there is $(m, n) \mapsto m - n$ among the linear forms, so the theorem applies verbatim. If $i = 1$ and $j = 3$ (or vice versa), $g(P_i(m, n))$ factors as $g(2)g(m)g(n)$. There is $(m, n) \mapsto n$ among the linear forms, and since $g$ is non-pretentious, so is $n \mapsto \chi(n)n^{-it}g(n)$. (Hence, it only mattered that we Fourier expanded one of the indicators and we did not need to Fourier expand both of them for the argument to work.) This proves the proposition.
\end{proof}

We are now ready to prove Theorem~\ref{thm:maindecomposition}.
\begin{proof}[Proof of Theorem~\ref{thm:maindecomposition}.]
Applying Proposition~\ref{prop:reductiontopretentious}, we may replace $f$ with its conditional expectation to the pretentious factor. Now fix $ij$ and decompose $f = \mathcal{P}_{ij}f + f_{\mathrm{comp}}$. Our objective is to show that
\begin{equation}\label{eq:structuredcontrol}
\limsup_{L, r, K, Y, X} \|\mathbb{E}_{q_1 \in \Phi_r}\mathbb{E}_{q_2 \in \Phi_{r, K}}\mathbb{E}_{\substack{m, n \le X \\ m, n \sim w_L}} T_{P_i(M, N)/P_j(M, N)}\mathcal{P}_{ij}f - \mathcal{P}_{ij}f\|_{L^2(\mathbf{X})} = 0    
\end{equation}
and
\begin{equation}\label{eq:randomcontrol}
\limsup_{L, r, K, Y, X} \|\mathbb{E}_{q_1 \in \Phi_r}\mathbb{E}_{q_2 \in \Phi_{r, K}}\mathbb{E}_{\substack{m, n \le X \\ m, n \sim w_L}} T_{P_i(M, N)/P_j(M, N)}f_{\mathrm{comp}}\|_{L^2(\mathbf{X})} = 0.    
\end{equation}

\textbf{Step 1: Proving \eqref{eq:structuredcontrol}.}
The estimate \eqref{eq:structuredcontrol} is immediate for $ij \in \{12, 13\}$. To prove that it holds for $ij = 23$, the spectral theorem reduces the claim to
$$\int_{\mathcal{M}} |\mathbb{E}_{q_1 \in \Phi_r}\mathbb{E}_{q_2 \in \Phi_{r, K}}\mathbb{E}_{\substack{m, n \le X \\ m, n \sim w_L}} g(P_2(Qm + 1, Qn + v))\overline{g}(P_3(Qm + 1, Qn + v)) - 1|^2 d\sigma_{\mathcal{P}_{23}f}(g).$$
Here, we may assume that $g(p) = \chi(p)p^{it_g}$ for all but finitely many $p$. Thus, taking $K$ and $r$ sufficiently large, we may assume that $g(P_2(Qm + 1, Qn + v))\overline{g}(P_3(Qm + 1, Qn + v)) = P_2(Qm + 1, Qn + v)^{it_g}P_3(Qm + 1, Qn + v)^{-it_g}$. \eqref{eq:structuredcontrol} thus follows from the following claim.
\begin{claim}
$$\limsup_{L\to\infty}\limsup_{X\to\infty} \left|\mathbb{E}_{\substack{m, n \le X \\ m, n \sim w_L}} \left(\frac{P_2(M,N)}{P_3(M,N)}\right)^{it}-1\right|=0.$$
\end{claim}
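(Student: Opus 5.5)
Since $t$ is fixed, the plan is to show that the ratio $P_2(M,N)/P_3(M,N)$ is uniformly close to $1$ on the support of $w_L$. Then $(P_2/P_3)^{it}=\exp\bigl(it\log(P_2/P_3)\bigr)$ is close to $1$ pointwise there. Write $\rho=N/M$, so that
$$\frac{P_2(M,N)}{P_3(M,N)}=\frac{1-\rho^2}{1+\rho^2},\qquad \log\frac{P_2}{P_3}=\log(1-\rho^2)-\log(1+\rho^2)=O(\rho^2)$$
whenever $\rho\le 1/2$. Hence
$$\left|\left(\frac{P_2}{P_3}\right)^{it}-1\right|\le |t|\,\left|\log\frac{P_2}{P_3}\right|\ll |t|\rho^2 .$$

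Next I bound $\rho$ on the support of $w_L$. There $\log(m/n)\ge L$, so $n\le e^{-L}m$. With $M=Qm+1$ and $N=Qn+v$, where $0\le v<Q$, we get
$$\rho=\frac{Qn+v}{Qm+1}\le \frac{n+1}{m}\le e^{-L}+\frac1m .$$
For any fixed $Q$ the term $1/m$ is harmless once $m$ is large, and $m\ge e^{L}n\ge e^L$. So in fact $\rho\le 2e^{-L}$ on the support, uniformly in $X$, $Q$, $v$. Note that $N\ge 0$ and $P_2>0$ hold on the support, so the powers are well defined; also $n\ge1$, $m\ge e^L$, and the normalization $\sum w_L\asymp_L X^2>0$ for large $X$ makes the weighted average well defined. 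Therefore every term satisfies $|(P_2/P_3)^{it}-1|\ll |t|e^{-2L}$. Since $\mathbb{E}_{m,n\sim w_L}$ is a probability average (nonnegative weights normalized to total mass $1$), the same bound holds for the average.

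Taking $\limsup_{X}$ and then $L\to\infty$ gives $0$. The bound is also uniform in $q_1,q_2,Y,K,r$, which matters for the application in Step 1, since there the claim is used inside the averages over $\Phi_r$ and $\Phi_{r,K}$.

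There is no real obstacle. The only point to check is that the bound on $\rho$ is uniform in the shift $v$ and the modulus $Q$. This holds because the inequality $(Qn+v)/(Qm+1)\le (n+1)/m$ uses only $v<Q$. The last step is to integrate this pointwise estimate against $\sigma_{\mathcal{P}_{23}f}$ via dominated convergence: each $t_g$ is fixed, and the integrand is bounded by $4$.
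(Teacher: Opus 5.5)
Your proof is correct and follows the paper's argument essentially step for step. You bound $N/M\le (n+1)/m\le 2e^{-L}$ on the support of $w_L$, using $v<Q$ and $m\ge e^{L}$, and then combine $\bigl|\log\tfrac{1-\rho^2}{1+\rho^2}\bigr|\ll\rho^2$ with $|e^{ix}-1|\le|x|$ to get a pointwise $O(|t|e^{-2L})$ bound, uniform in $X$, that passes directly to the normalized weighted average.
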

\begin{proof}
Since $\log(m/n) \in [L, 2L]$ within the domain of averaging, we have $e^{-2L}\le n/m\le e^{-L}$. Since $n\ge1$, this also gives $1/m\le e^{-L}$. It follows that
$$\frac{N}{M} = \frac{Qn+v}{Qm+1} \le \frac{Qn + Q}{Qm} = \frac{n}{m} + \frac{1}{m} \le 2e^{-L}.$$
Thus, letting $r = N/M$, we have
$$\frac{P_2(M, N)}{P_3(M, N)} = \frac{1 - r^2}{1 + r^2}$$
so
$$\left|\log\frac{P_2(M, N)}{P_3(M, N)}\right| \ll r^2 \ll e^{-2L}.$$
It thus follows that
$$|(P_2(M, N)/P_3(M, N))^{it} - 1| \le |t| \left|\log\frac{P_2(M, N)}{P_3(M, N)}\right| \ll |t|e^{-2L}.$$
Hence the claim.
\end{proof}
\textbf{Step 2: Proving \eqref{eq:randomcontrol}.} By the spectral theorem, it suffices to show the following.
$$\limsup_{L, r, K, Y, X} \int |\mathbb{E}_{q_1 \in \Phi_r}\mathbb{E}_{q_2 \in \Phi_{r, K}}\mathbb{E}_{\substack{m, n \le X \\ m, n \sim w_L}} g(P_i(M, N)/P_j(M, N))|^2 d\sigma_{f_{\mathrm{comp}}}(g)= 0.$$
We show that for all $g$ considered in this integral that
$$\limsup_{L, r, K, Y, X} |\mathbb{E}_{q_1 \in \Phi_r}\mathbb{E}_{q_2 \in \Phi_{r, K}}\mathbb{E}_{\substack{m, n \le X \\ m, n \sim w_L}} g(P_i(M, N)/P_j(M, N))| = 0.$$
The proof naturally splits into several cases. 

\textbf{Case 1:} Suppose that $g$ has finite pretentious distance to $n\mapsto\chi(n)n^{it}$ and that one of the following occurs.
\begin{itemize}
    \item $g$ is not identically equal to $n\mapsto\chi(n)n^{it}$ and $ij\in\{12,13\}$;
    \item $g(p)\neq\chi(p)p^{it}$ for infinitely many primes $p$ and $ij = 23$;
    \item $\chi$ is nontrivial and $ij\in\{12,13\}$.
\end{itemize}
At this point, we assume that $L$ is fixed. By \eqref{eq:gcd-normalization}, we may factor $P_1(M, N) = q_1R_1(m, n)$ and $P_2(M, N) = q_2R_2(m, n)$ with both $R_1$ and $R_2$ relatively prime to $Q$. 

Applying the concentration estimates \cite[Proposition 2.1, Proposition 2.2]{pret_pyth} (see also \cite{Fra-Klu-Mor}), and using \eqref{eq:crt-product-congruences} and the fact that, for fixed $L$, the normalized $w_L$-average has density $O_L(1)$ with respect to the uniform box average, we may approximate $g$ with $\chi(n)n^{it}$ with the corresponding pretentious model, with the error controlled by the cited estimates. Specifically, the main terms are obtained by replacing $g(P_1(M, N))$ with 
$$g(q_1)\chi(P_1(M, N)/q_1)(P_1(M, N)/q_1)^{it} \exp(F_X(g, Y)),$$
we replace $g(P_2(M, N))$ with 
$$g(q_2)\chi(P_2(M, N)/q_2)(P_2(M, N)/q_2)^{it} \exp(F_X(g, Y)),$$
and we replace $g(P_3(M, N))$ with 
$$\chi(P_3(M, N))(P_3(M, N))^{it} \exp(G_{P_3, X}(g, Y)).$$
Now, if $ij = 12$, the extra exponential factor becomes $\exp(\mathrm{Re}(F_X(f, Y)))$, and via a Taylor expansion, we have that
$$\left(\frac{P_1(M, N)}{P_2(M, N)}\right)^{it} = \left(\frac{2mn}{m^2 - n^2}\right)^{it} + O_Q\left(\frac{1}{m}\right).$$
The latter error term is negligible, so we are left with
$$g(q_1)\overline{g}(q_2)q_1^{-it}q_2^{it}\chi(q_2)\left(\frac{2mn}{m^2 - n^2}\right)^{it}.$$
Averaging this in $q_1$ gives that
$$\limsup_{r \to \infty} \left|\mathbb{E}_{q_1 \in \Phi_r}g(q_1)\overline{g}(q_2)q_1^{-it}q_2^{it}\overline{\chi(q_1)}\chi(q_2)\left(\frac{2mn}{m^2 - n^2}\right)^{it}\right| = 0$$
since $g \neq \chi(n)n^{it}$ or $\chi$ is nontrivial.

A similar argument works for $ij = 13$. The only additional point is that $\exp(F_X(g, Y))$ and $\exp(G_X(g, Y))$ are uniformly bounded because $g$ is pretentious. Now if $ij = 23$, we instead have 
$$\limsup_{r \to \infty} \limsup_{K \to \infty} \left|\mathbb{E}_{q_2 \in \Phi_{r, K}}g(q_2)q_2^{-it}\overline{\chi(q_2)}\left(\frac{m^2 - n^2}{m^2 + n^2}\right)^{it}\exp(F_X(f, Y)) \exp(\overline{G_{P_3, X}(f, Y)})\right| = 0$$
as long as $g$ is not equal to $p \mapsto p^{it} \chi(p)$ for infinitely many primes $p$. 

\textbf{Case 2: Handling the archimedean case for $ij \in \{12, 13\}$.}
It remains to treat the nontrivial Archimedean characters $g(n)=n^{it}$ with $t\neq0$. (The case $t=0$ is the invariant component already contained in $\mathcal{P}_{ij}f$.) This follows from the next claim.
\begin{claim}
For every fixed $t\neq0$,
$$\limsup_{L\to\infty}\limsup_{X\to\infty} |\mathbb{E}_{\substack{m, n \le X \\ m, n \sim w_L}} (P_i(M, N)/P_j(M, N))^{it}| = 0.$$
\end{claim}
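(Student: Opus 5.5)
We prove the claim by reducing it to a one-dimensional oscillatory integral in the variable $\log(m/n)$, and then using the fact that the weight $w_L$ makes that variable roughly uniformly distributed on an interval of length $L$.

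\textbf{Reduction to a function of $\rho = n/m$.} On the support of $w_L$ we have $n/m\le e^{-L}$ and $1/m\le e^{-L}$. As in the proof of the previous claim, replacing $(M,N)$ by $(Qm,Qn)$ changes $\log(N/M)$ by $O_Q(1/n+1/m)$. The terms with $n$ small, or with $m\le \sqrt X$, carry a vanishing share of the $w_L$-mass as $X\to\infty$, because $\sum w_L\asymp_L X^2$. So up to $o_{X\to\infty}(1)$ errors we may replace $P_i(M,N)/P_j(M,N)$ by $P_i(1,\rho)/P_j(1,\rho)$, where $\rho=n/m$.

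For $ij=12$ this quantity is $2\rho/(1-\rho^2)$. For $ij=13$ it is $2\rho/(1+\rho^2)$. In both cases
\[
\log\frac{P_i}{P_j}=\log 2+\log\rho+O(\rho^2)=\log 2-\log(m/n)+O(e^{-2L}).
\]
Hence
\[
\Bigl(\frac{P_i(M,N)}{P_j(M,N)}\Bigr)^{it}=2^{it}\Bigl(\frac mn\Bigr)^{-it}+O(|t|e^{-2L})+o_{X}(1).
\]
It therefore suffices to prove
\[
\limsup_{L}\limsup_X\Bigl|\mathbb{E}_{\substack{m,n\le X\\ m,n\sim w_L}}(m/n)^{-it}\Bigr|=0.
\]

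\textbf{Computing the weighted average.} We compute $\sum_{m,n\le X} w_L(m,n)\,(m/n)^{-it}$ by comparing the sum with an integral. The summand is $(m/n)^{1-it}$ restricted to $n\in[me^{-2L},me^{-L}]$. Summing over $n$ for fixed $m$, the Riemann sum approximates $\int (m/y)^{1-it}\,dy$. Substituting $y=me^{-s}$ gives $m\int_L^{2L}e^{s(1-it)}e^{-s}\,ds$, which equals $m\int_L^{2L}e^{-its}\,ds$.

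The discretization error is $O(e^{2L})$ per value of $m$, which is negligible against $m$ once $m\gg_L 1$. Summing over $m\le X$ then shows that the numerator is asymptotic to $\tfrac{X^2}{2}\int_L^{2L}e^{-its}\,ds$. By the same computation with $t=0$, the denominator is asymptotic to $\tfrac{X^2}{2}\cdot L$.

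The normalized average therefore tends, as $X\to\infty$, to
\[
\frac1L\int_L^{2L}e^{-its}\,ds,
\]
whose absolute value is at most $2/(|t|L)$. This tends to $0$ as $L\to\infty$, since $t\ne0$. The factor $2^{it}$ has modulus one, and the $O(|t|e^{-2L})$ error also vanishes as $L\to\infty$, which proves the claim.

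\textbf{Main obstacle.} The step needing the most care is justifying the replacement of $(M,N)$ by $(m,n)$ with uniform error control. The Taylor error $O_Q(1/n)$ is not small when $n$ is bounded, so one must check that the region $n\le C$, or $m\le \sqrt X$, has $w_L$-mass $o(X^2)$. For fixed $L$ and $Q$ this holds: for each $m$ the weight of such $n$ is at most $O(e^{2L})$, so that region contributes $O_L(X)$ out of a total of $\asymp_L X^2$.
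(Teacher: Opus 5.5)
Your proposal is correct and follows essentially the same route as the paper. You remove the affine shifts to pass from $(M,N)$ to $(m,n)$, use $\log(P_i/P_j)=\log 2-\log(m/n)+O(e^{-2L})$, and evaluate the $w_L$-weighted average as $\frac1L\int_L^{2L}e^{-itu}\,du=O(1/(|t|L))$. Your explicit treatment of the low-mass region where $n$ is bounded (where the $O_Q(1/n)$ phase error is not small) and of the Riemann-sum discretization error fills in a step the paper only asserts.
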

\begin{proof}

We estimate
$$\frac{N}{M} - \frac{n}{m} = \frac{Qn + v}{Qm + 1} - \frac{n}{m} = \frac{vm - n}{m(Qm + 1)}.$$
Since $0 \le v < Q$ and $n < m$ on the support of $w_L$, it follows that $|vm - n| \le (Q + 1)m$. Hence
$$\frac{N}{M} - \frac{n}{m} = O(1/m).$$
Now letting $R = N/M$, we have
$$\frac{P_1(M, N)}{P_2(M, N)} = \frac{2R}{1 - R^2}, \quad \frac{P_1(M, N)}{P_3(M, N)} = \frac{2R}{1 + R^2}.$$
Letting $r = n/m$, we have
$$\frac{P_1(m, n)}{P_2(m, n)} = \frac{2r}{1 - r^2}, \quad \frac{P_1(m, n)}{P_3(m, n)} = \frac{2r}{1 + r^2}.$$
For fixed $Q,v,L$, the affine shifts therefore disappear in the weighted Riemann-sum limit as $X\to\infty$. Equivalently, the limiting weighted average is the same as the one obtained from the homogeneous pair $(m,n)$. Now let $u=\log(m/n)$. Note that
$$\frac{P_1(m, n)}{P_2(m, n)} = \frac{2e^u}{e^{2u} - 1}, \quad \frac{P_1(m, n)}{P_3(m, n)} = \frac{2e^u}{1 + e^{2u}}.$$
For fixed $L$ and bounded Riemann-integrable $G$, the weighted Riemann-sum calculation gives
$$\lim_{X \to \infty} \mathbb{E}_{\substack{m, n \le X \\ m, n \sim w_L}} G(\log(m/n)) = \frac{1}{L} \int_L^{2L} G(t) dt.$$
It remains to show that
$$\frac{1}{L} \int_{L}^{2L} \left(\frac{2e^u}{e^{2u} - 1}\right)^{it} du = o_{L \to \infty}(1), \quad \frac{1}{L} \int_{L}^{2L} \left(\frac{2e^u}{e^{2u} + 1}\right)^{it} du = o_{L \to \infty}(1).$$
Note that 
$$\log\left(\frac{2e^u}{e^{2u} - 1}\right) = \log(2) - u - \log(1 - e^{-2u}) = \log(2) - u + O(e^{-2u}).$$
It follows that
$$\left(\frac{2e^u}{e^{2u} - 1}\right)^{it} = e^{it(\log(2) - u)} + O(|t|e^{-2L}).$$
Substituting this estimate gives
$$e^{it\log(2)}\frac{1}{L} \int_{L}^{2L} e^{-itu} du + O(|t|e^{-2L}) = O\left(\frac{1}{|t|L} + |t|e^{-2L}\right).$$
Similarly, we have
$$\log\left(\frac{2e^u}{e^{2u} + 1}\right) = \log(2) - u - \log(1 + e^{-2u}) = \log(2) - u + O(e^{-2u})$$
so the second integral is also $O\left(\frac{1}{|t|L} + |t|e^{-2L}\right)$. This completes the proof.
\end{proof}
This proves \eqref{eq:randomcontrol} and thus Theorem~\ref{thm:maindecomposition}.

\end{proof}

\section{AI acknowledgement and methodology}\label{sec:aiacknowledgement}
As remarked in the abstract, this proof was essentially found using ChatGPT 6.0 Astra Ultra on the Codex app \cite{ChatPythagorean}. The author subsequently rewrote the argument, with additional assistance from ChatGPT\footnote{The author found querying ChatGPT Instant and Extra High particularly useful for this purpose}, to supply more detail and introduce the ergodic-theoretic formalism. We describe here how the proof was obtained. The author started with the following prompt: \\\\
``Suppose the natural numbers are colored a finite number of colors. Prove that there exists $x, y, z$ of the same color such that $x^2 + y^2 = z^2$. Use maximum effort. Spend 1 day on this. Build an AI civilization to dispatch this problem if you have to. I attached a non-public manuscript recording some partial progress I have on this problem. It may or may not be helpful. Spend one agent to investigate this route though 3 other agents to investigate different routes.''\\\\
The author then attached an unpublished manuscript related to this problem.\footnote{As the unpublished manuscript is a work in progress, we do not make it available here.} A few minutes later, the author prompted it, ``Spend one subagent looking for a disproof.'' After ten hours, it had not proved or disproved the statement. The author subsequently asked it for a new proof of the two-color case, which produced the argument developed here. 
\bibliographystyle{amsplain}
\bibliography{main}

\end{document}